\title{\textbf{The lamplighter group $\mathbb{Z}_3\wr\mathbb{Z}$ generated by a bireversible automaton}}
\author{Ievgen Bondarenko\\
\and
Daniele D'Angeli\\
\and
Emanuele Rodaro \\
}
\newcommand{\Addresses}{{
\bigskip
\footnotesize
\noindent I.~Bondarenko, \emph{Department of Algebra and Mathematical Logic Mechanics and Mathematics Faculty 
National Taras Shevchenko University of Kyiv.}
\textit{E-mail address}:\texttt{ievgbond@gmail.com}

\medskip

\noindent D.~D'Angeli, \emph{Institut f\"{u}r Mathematische Strukturtheorie (Math C), Technische Universit\"{a}t Graz, Steyrergasse 30, 8010 Graz.}
\textit{E-mail address}:\texttt{dangeli@math.tugraz.at}

\medskip
  
\noindent E.~Rodaro, \emph{Centro de Matem\'atica, University of Porto, Rua do Campo Alegre, 687, Porto, 4169-007, Portugal.}
\textit{E-mail address}:\texttt{emanuele.rodaro@fc.up.pt}
}}
\newcommand{\dual}{\partial}
\newcommand{\inv}{\textit{i}}
\newcommand{\Aut}{\textrm{Aut}}
\newcommand{\Sym}{\textrm{Sym}}
\newtheorem{theorem}{Theorem}
\newtheorem{proposition}[theorem]{Proposition}
\newtheorem{corollary}{Corollary}[theorem]
\newtheorem{lemma}{Lemma}
\theoremstyle{definition}
\newtheorem{remark}{Remark}
\begin{document}

\date{}

\maketitle

\begin{abstract}
We construct a bireversible self-dual automaton with $3$ states over an alphabet with
$3$ letters which generates the lamplighter group $\mathbb{Z}_3\wr\mathbb{Z}$.

\vspace{0.2cm}\textit{2010 Mathematics Subject Classification}: 20F65, 20M35

\textit{Keywords}: automaton group, bireversible automaton, lamplighter group
\end{abstract}


\section{Introduction}

Groups generated by automata (or automaton groups) are interesting from several points
of view. First of all, automaton groups provide simple examples of groups with many
extraordinary properties: finitely generated infinite torsion groups, groups of
intermediate growth, just-infinite groups, groups with non-uniformly exponential
growth. At the same time, groups generated by automata arise in various areas of
mathematics: in fractal geometry via limit spaces of automaton groups, in complex
dynamics via iterated monodromy groups, in graph theory via Schreier graphs of
automaton groups, in dynamical systems via limit dynamical systems of automaton groups,
in game theory via algebraic models of games (see
\cite{BarthSilva,gri_sunik:branching,self_sim_groups} and the reference therein).



There is an ongoing project to understand which groups can be realized by finite
automata. We mention only a few results in this direction relative to the current
paper. In \cite{GriZuk:lampl} Grigorchuk and \.{Z}uk showed that the lamplighter group
$\mathbb{Z}_2\wr\mathbb{Z}$ can be generated by a $2$-state automaton over a $2$-letter
alphabet, which further lead to a negative answer to the strong Atiyah conjecture
concerning $L^2$-Betti numbers \cite{Ariyah}. Silva and Steinberg
\cite{SilvaStein:lamp} realized the lamplighter groups $\mathbb{Z}_n\wr\mathbb{Z}$ by
the so-called reset automata, these automata were further generalized in
\cite{groups_coloring}. Some solvable automaton groups were realized by Bartholdi and
\v{S}uni\'{k} in \cite{BartSun:solvable}.


There are two standard operations that can be performed on automata: taking dual automaton by interchanging
the alphabet with the set of states, and taking inverse automaton by switching input
and output letters (in general, the inverse of an automaton may be not well-defined,
i.e., it may be not a (deterministic) automaton). By applying these two operations to
any automaton one can produce up to eight automata. If all these automata are
well-defined, the original automaton is called bireversible. The study of bireversible
automata was initiated by Macedo\'{n}ska, Nekrashevych and Sushchansky
\cite{MacNekrSush} in connection with the commensurator of a regular (unrooted) tree.
New geometric ideas came to the area with the paper \cite{square_compl} of Glasner and
Mozes, who associated a square complex to each finite automaton and noticed that an
automaton is bireversible if and only if the universal covering of the associated
square complex is a topological product of two trees. This approach led to the first
automaton realizations of free groups.

It is surprisingly difficult to describe a group generated by a bireversible automaton.
For example, there are only two bireversible automata with $3$ states over an alphabet
with $2$ letters generating infinite groups --- Aleshin and Bellaterra automata. The
Bellaterra automaton generates the free product of three copies of $C_2$
\cite[Theorem~1.10.2]{self_sim_groups}. As concerning Aleshin automaton, it was an open
question for a long time whether this automaton generates the free group of rank three,
until this was confirmed by Vorobets and Vorobets \cite{Vorobets:Aleshyn}. Two families
of bireversible automata generalizing Aleshin and Bellaterra automata were studied in
\cite{Vorobets:Series,SteinVor,SavVor}: the automata in these families generate a free
group of finite rank or the free product of copies of $C_2$. Up to now all investigated
bireversible automata generate finitely presented groups, while all the automata
generating $(\mathbb{Z}_n)^{k}\wr\mathbb{Z}$, $k\ge 1$, from \cite{BartSun:solvable,
groups_coloring,GriZuk:lampl,SilvaStein:lamp} are not bireversible as well as all known
automata generating infinite torsion groups and groups of intermediate growth.

In this paper we consider the automaton $A$ with $3$ states over an alphabet with
$3$ letters shown in Figure~\ref{Fig_Automaton}. This automaton is bireversible and its
dual automaton $\dual(A)$ is equivalent to $A$: the correspondence $a\mapsto 1$,
$b\mapsto 3$, $c\mapsto 2$ converts $A$ to $\dual(A)$. Therefore all eight automata
obtained from $A$ by taking dual and inverse automata generate isomorphic groups. Our
goal is to prove the following main theorem.

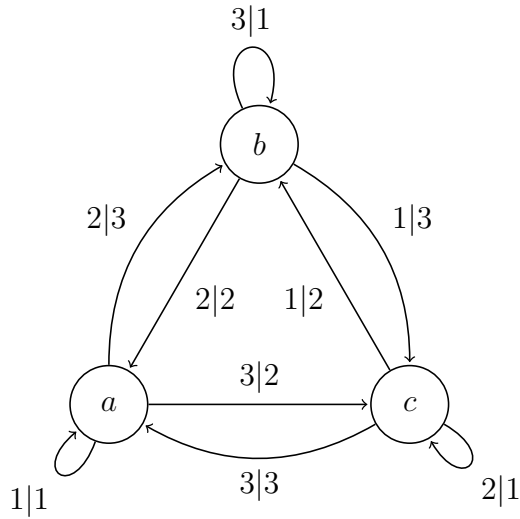
\begin{figure}
\begin{center}
\begin{tikzpicture}[shorten >=1pt,node distance=1cm, on grid,auto,/tikz/initial text=,semithick,transform shape]
  \node[state] at (0,0) (a) {$a$};
  \node[state] at (2,3.46) (b) {$b$};
  \node[state] at (4,0) (c) {$c$};
    \path[->]
    (a) edge [in=220, out=250, loop] node  {$1|1$} (a)
    (b) edge [in=75, out=115, loop] node [above]  {$3|1$} (b)
    (c) edge [in=-60, out=-30, loop] node  {$2|1$} (c)
    (a) edge [bend left] node {$2|3$} (b)
    (b) edge [bend left] node {$1|3$} (c)
    (c) edge [bend left] node {$3|3$} (a)
    (a) edge  node {$3|2$} (c)
    (c) edge  node {$1|2$} (b)
    (b) edge  node {$2|2$} (a);
\end{tikzpicture}
\caption{The automaton $A$ generating the lamplighter group $\mathbb{Z}_3\wr\mathbb{Z}$}\label{Fig_Automaton}
\end{center}
\end{figure}

\begin{theorem}\label{thm_main}
The group $G_A$ generated by the automaton $A$ is isomorphic to the lamplighter group
$\mathbb{Z}_3\wr\mathbb{Z}$.
\end{theorem}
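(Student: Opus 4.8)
The strategy is to construct a surjection $\mathbb{Z}_3\wr\mathbb{Z}\twoheadrightarrow G_A$ and then to show that $G_A$ is not a \emph{proper} quotient of $\mathbb{Z}_3\wr\mathbb{Z}$; together these force an isomorphism.

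For the surjection I would pass to the modified generators $x_0:=ab^{-1}$, $x_1:=bc^{-1}$, $x_2:=ca^{-1}$ (so $x_0x_1x_2=1$ and $G_A=\langle x_0,x_1,a\rangle$). Reading the wreath recursion of $A$ off Figure~\ref{Fig_Automaton} and rewriting, one finds that each $x_i$ acts as the $3$-cycle $(1\,2\,3)$ on the first level and
\[
x_0=(1\,2\,3)(x_1^{-1},x_0^{-1},x_0x_1),\qquad x_1=(1\,2\,3)(x_0x_1,x_1^{-1},x_0^{-1}),\qquad x_2=(1\,2\,3)(x_0^{-1},x_0x_1,x_1^{-1}),
\]
where the triple lists the sections at the vertices $1,2,3$. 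From these one computes the key identities
\[
[x_0,x_1]=(x_0^{3},x_2^{3},x_1^{3}),\qquad x_0^{3}=\bigl([x_0,x_1],\,x_1^{-1}[x_0,x_1]x_1,\,x_0^{-1}x_1^{-1}x_0x_1\bigr)
\]
together with their cyclic analogues. These are consistent with ``$x_0,x_1,x_2$ pairwise commute and have order $3$'', and I would promote consistency to truth by induction on the level $n$: the statement ``in the finite quotient $G_A^{(n)}$ acting on the first $n$ levels of the tree the images of $x_0,x_1,x_2$ pairwise commute and cube to $1$'' passes from $n$ to $n+1$ because, by the displayed formulas, the first-level sections of each commutator $[x_j,x_k]$ are among the $x_i^{3}$, while the first-level sections of each $x_i^{3}$ are conjugates of the $[x_j,x_k]$ --- all trivial on level $n$ by the inductive hypothesis, and triviality on level $n$ is a normal-subgroup condition preserved under conjugation. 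Hence $H:=\langle x_0,x_1\rangle$ is elementary abelian of exponent $3$. Next I would exhibit an infinite-order element $t\in G_A$ with $G_A=\langle x_0,t\rangle$ whose conjugates $t^{-k}x_0t^{k}$ lie in, and sweep out, the translates of $H$, so that $N:=\langle\langle x_0\rangle\rangle$ is a locally finite elementary abelian $3$-group normalised by $t$ with $G_A/N$ cyclic; a further induction of the same kind yields $x_0^{3}=1$ and $[x_0,t^{-k}x_0t^{k}]=1$ for all $k\ge1$. Comparing with the standard presentation $\mathbb{Z}_3\wr\mathbb{Z}=\langle x,t\mid x^{3},\,[x,t^{-k}xt^{k}]\ (k\ge1)\rangle$ then gives a surjection $\psi\colon\mathbb{Z}_3\wr\mathbb{Z}\twoheadrightarrow G_A$.

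To see $G_A$ is not a proper quotient I would use: every proper quotient of $L=\mathbb{Z}_3\wr\mathbb{Z}$ is virtually cyclic. Indeed a nontrivial normal subgroup $N\trianglelefteq L$ must meet the base group $B=\bigoplus_{\mathbb{Z}}\mathbb{Z}_3\cong\mathbb{F}_3[t,t^{-1}]$ nontrivially --- otherwise commuting an element of $N$ with $t$ forces a finitely supported element of $B$ to be shift-invariant, hence zero, so $N\le\langle t\rangle$, which is not normal --- and then $N\cap B$ is a nonzero ideal of the principal ideal ring $\mathbb{F}_3[t,t^{-1}]$, so $B/(N\cap B)$ is finite and $L/N$ is (finite)-by-cyclic. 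It therefore suffices to check that $G_A$ is infinite and not virtually cyclic. Infiniteness follows once $t$ is seen to have infinite order, which I would read from its recursion: a section of a suitable power of $t$ is again that power of $t$ up to finitary factors, so $t$ acts like an adding machine on a subtree and has infinite orbits. That $G_A$ is not virtually cyclic follows from the infinite locally finite subgroup $N$; equivalently, one must verify that $\psi$ does not collapse the base $B$, i.e.\ that the lamps $x_0,t^{-1}x_0t,t^{-2}x_0t^{2},\dots$ stay independent, which can be done by exhibiting, for each $n$, tree vertices on which they act differently. Combining the two parts, $\ker\psi=1$ and $G_A\cong\mathbb{Z}_3\wr\mathbb{Z}$.

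The step I expect to be the main obstacle is the joint control of the two opposing requirements: that the full infinite list of lamplighter relations ($x_0^{3}=1$ and all $[x_0,t^{-k}x_0t^{k}]=1$) genuinely holds, which needs the level inductions to be set up with a recursion-closed family of statements; and that no further relations are present, i.e.\ $\psi$ is injective, which is essentially faithfulness of the standard action of $\mathbb{Z}_3\wr\mathbb{Z}$ on the ternary tree (the infinite order of $t$ being a special case). If the independence estimate proves awkward, an alternative for the second half is to produce a surjection $G_A\twoheadrightarrow\mathbb{Z}_3\wr\mathbb{Z}$ and invoke Hopficity of $\mathbb{Z}_3\wr\mathbb{Z}$ (finitely generated and residually finite), since then $\mathbb{Z}_3\wr\mathbb{Z}\twoheadrightarrow G_A\twoheadrightarrow\mathbb{Z}_3\wr\mathbb{Z}$ is forced to be an isomorphism.
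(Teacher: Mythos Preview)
Your high-level architecture---verify the lamplighter relations to get a surjection $\mathbb{Z}_3\wr\mathbb{Z}\twoheadrightarrow G_A$, then exclude proper quotients because those are virtually cyclic---matches the paper's. But the two load-bearing steps are either miscomputed or left as genuine gaps. First, your displayed recursions for $x_0,x_1,x_2$ are wrong; with the paper's conventions one finds $x_0=(x_0,x_1,x_2)(1,3,2)$, $x_1=(x_2,x_0,x_1)(1,3,2)$, $x_2=(x_1,x_2,x_0)(1,3,2)$, from which it follows \emph{immediately} that $x_0=x_1=x_2=:\alpha$ in $G_A$, with $\alpha=(\alpha,\alpha,\alpha)(1,3,2)$ and $\alpha^3=e$. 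So your proposed induction on $[x_j,x_k]$ and $x_i^3$ is vacuous, and $H=\langle x_0,x_1\rangle$ is just $\langle\alpha\rangle\cong\mathbb{Z}_3$. The real content is that the infinitely many conjugates $a^{-n}\alpha a^n$ commute, and ``a further induction of the same kind'' does not cover this: the sections of $a^{-n}\alpha a^n$ are not drawn from a fixed finite list. The paper's device is to introduce the subgroup $W\le\Aut(X^*)$ of automorphisms whose root permutations on each level are constant and lie in $Alt_3$; this $W$ is abelian of exponent~$3$ by construction, and the key closure lemma is that $x^{-1}Wy\subseteq W$ and $xWy^{-1}\subseteq W$ for $x,y\in\{a,b,c\}$ (proved by tracking how a ``type'' index on sections evolves). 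Every exponent-sum-zero word then lies in $W$, giving all the relations at once.

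Second, the independence of the lamps---your injectivity of $\psi$---is where the paper's argument is most specific and where your plan (``exhibit vertices'', or Hopficity) has no mechanism yet. The paper proceeds indirectly, exploiting bireversibility and self-duality: it shows $G_A$ is level-transitive on $X^n$ (via a self-similar recursion satisfied by $a^3,b^3,c^3$), transports this by self-duality to level-transitivity of $G_{\partial(A)}$ on $\{a,b,c\}^n$, deduces that every length-$n$ positive word in $a,b,c$ is a section of every other, and from this proves that the \emph{semigroup} $\langle a,b,c\rangle^{+}$ is free. Substituting $\alpha=ab^{-1}$ into any purported dependence among the $a^{-n}\alpha a^n$ and cancelling then contradicts freeness; the same freeness gives infinite order of $a$. (Your ``adding machine'' heuristic for $t=a$ does not work as stated: $a$ fixes $1^{\infty}$, so it is not an odometer.) In short, your outline is right, but the paper's two essential ideas---the diagonal subgroup $W$ and the free positive semigroup via self-duality---are precisely what is missing from the plan.
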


In particular, we get an example of a bireversible automaton generating infinitely
presented group. While finishing this article, we were informed by D.~Savchuk and S.~Sidki
that they proved that a certain bireversible automaton with $4$ states over an alphabet
with $2$ letters generates the infinitely presented group
$\left((\mathbb{Z}_2\times\mathbb{Z}_2)\wr \mathbb{Z}\right)\rtimes \mathbb{Z}_2$.

\section{Preliminaries}

Let $X$ be a finite alphabet and $X^{*}$ the free monoid freely generated by $X$. The
elements of $X^{*}$ are finite words $v=x_1x_2\ldots x_n$, $x_i\in X$,
$n\in\mathbb{N}$, together with the empty word denoted $\emptyset$. The operation is
concatenation of words. The length of a word $v=x_1x_2\ldots x_n$ is $|v|=n$. We will
also consider the space $X^{\mathbb{N}}=\{x_1x_2\ldots: \ x_i\in X\}$ of all right
infinite sequences over $X$ with the product topology of discrete sets $X$.

An automaton $A$ over the alphabet $X$ is a finite directed labeled graph, whose
vertices are called the states of the automaton, and where each edge is labeled by a
pair $x|y$ for some letters $x,y\in X$ in such a way that for each vertex $s\in A$ and
every letter $x\in X$ there exists exactly one arrow outgoing from $s$ and labeled by
$x|y$ for some $y\in X$. Such automata are precisely finite complete deterministic
Mealy automata with the same input and output alphabets.

The dual automaton $\dual(A)$ is obtained by interchanging the alphabet with the set of
states: the states of $\dual(A)$ are the elements of $X$ and the arrows are given by
the rule
\[
x\xrightarrow{s|t}y \mbox{ \ in $\dual(A)$ \quad if \quad }  s\xrightarrow{x|y}t \mbox{ \ in $A$}.
\]
The dual automaton $\dual(A)$ is always well-defined. The inverse automaton $\inv(A)$
is obtained by switching labels of arrows: the states of $\inv(A)$ are formal symbols
$s^{-1}$ for $s\in A$ and the arrows are given by
\[
s^{-1}\xrightarrow{y|x}t^{-1} \mbox{ \ in $\inv(A)$ \quad if \quad }  s\xrightarrow{x|y}t \mbox{ \ in $A$}.
\]
The $\inv(A)$ is not always an automaton: there may be several arrows with the same
left label outgoing from the same vertex. If $\inv(A)$ is an automaton then $A$ is
called invertible. An automaton $A$ is called bireversible if all eight automata are
well-defined:
\[
A, \  \dual(A), \ \inv(A), \ \inv(\dual(A)), \ \dual(\inv(A)), \ \dual(\inv(\dual(A))), \ \inv(\dual(\inv(A))), \ \inv(\dual(\inv(\dual(A))))=\dual(\inv(\dual(\inv(A)))).
\]
It is easy to see that $A$ is bireversible if $A$, $\dual(A)$ and $\dual(\inv(A))$ are
invertible.

Let us describe how to generate groups by automata. Every state $s$ of an automaton $A$
defines the transformation $s:X^{*}\rightarrow X^{*}$ as follows. Given a word
$v=x_1x_2\ldots x_n\in X^{*}$, there exists a unique directed path in the automaton $A$
starting at the state $s$ and labeled by $x_1|y_1$, $x_2|y_2$,\ldots,$x_n|y_n$ for some
$y_i\in X$. Then the word $y_1y_2\ldots y_n$ is called the image of $x_1x_2\ldots x_n$
under $s$, and the vertex at the end of this path is called the section of $s$ at $v$
denoted $s|_v$. The action of an automaton $A$ on the space $X^{*}$ naturally
extends to the action on the space $X^{\mathbb{N}}$. All the transformations given by
the states of $A$ are invertible if and only if $A$ is invertible; in this case these
transformations generate a group under composition of functions called the automaton
group $G_A$ generated by the automaton $A$ (we will be using left actions).

A convenient way to work with transformations given by the states of an automaton are
wreath recursions. Every invertible automaton with states $\{s_1,\ldots,s_m\}$ over the
alphabet $X=\{1,2,\ldots,d\}$ can be uniquely given by the following system called
wreath recursion:
\begin{align}\label{eqn_wreath_recursion}
s_1&=(s_{11},s_{12},\ldots,s_{1d})\pi_1,\nonumber\\
s_2&=(s_{21},s_{22},\ldots,s_{2d})\pi_2,\\
  & \vdots \nonumber\\
s_m&=(s_{m1},s_{m2},\ldots,s_{md})\pi_m,\nonumber
\end{align}
where $s_{ij}=s_i|_j\in \{s_1,\ldots,s_m\}$ is the section of $s_i$ at $j$, and
$\pi_i\in Sym(X)$ is the permutation induced by the action of $s_i$ on $X$. The tuples
$(s_{ij})$ describe the arrows in automaton while permutations $\pi_i$ describe the
labels of arrows: we have an arrow from the vertex $s_i$ to vertex $s_{ij}$ labeled by $j|\pi_i(j)$.
The system (\ref{eqn_wreath_recursion}) defines the action of each
state $s_i$ on words over $X$ by the recursive rule:
\[
s_i(xv)=\pi_i(x)s_{ix}(v) \mbox{ \ for $x\in X, v\in X^{*}\cup X^{\mathbb{N}}$ and $i=1,\ldots,m$.}
\]

Similarly, one can use wreath recursions to work with elements of automaton groups.
Define the section of a product (or word) $s_1s_2\ldots s_n$, $s_i\in A^{\pm 1}$ at
$v\in X^{*}$ by the rule
\begin{equation*}
(s_1s_2\ldots s_n)|_v=s'_1s'_2\ldots s'_n, \ \mbox{where } s'_i=s_i|_{(s_1\ldots s_{i-1})(v)}.
\end{equation*}
Then every element $g\in G_A$ over the alphabet $X=\{1,2,\ldots,d\}$ can be decomposed
as
\begin{equation}\label{eqn_wreath_decomp}
g=(g|_1,g|_2,\ldots,g|_d)\pi_g,
\end{equation}
where $\pi_g\in Sym(X)$ is the permutation induced by the action of $g$ on $X$, and
$g|_i$ are the sections of $g$ at elements of $X$. The inverse and multiplication of
elements written in this form can be performed by the rules
\begin{align*}
g^{-1}&=(g|_{\pi_g^{-1}(1)},g|_{\pi_g^{-1}(2)},\ldots,g|_{\pi_g^{-1}(d)})\pi_g^{-1},\\
g\cdot h&=(g|_1h|_{\pi_g(1)},g|_2h|_{\pi_g(2)},\ldots,g|_dh|_{\pi_g(d)})\pi_g\pi_h.
\end{align*}

There is a direct connection between sections of words over states of $A$ and the action of the dual automaton $\partial(A)$. Elements of the group generated by $\partial(A)$ act on words over the states of $A$.
If $v$ is a word over alphabet (an element of $G_{\partial(A)}$) and $w$ is a word over states, then the image $v(w)$ is equal to the section $w|_v$.

The terminology of wreath recursions comes from the wreath decomposition of
automorphism groups of regular rooted trees. The set $X^{*}$ can be identified with the
vertex set of a rooted tree with empty word as the root and with edges $(v,vx)$ for
$x\in X$, $v\in X^{*}$. For this reason words over $X$ are usually called vertices.
The set $X^n$ of words of lengths $n$ is called the $n$-th level of the tree $X^{*}$.
The set $X^{\mathbb{N}}$ can be identified with the boundary of $X^{*}$. The
transformations defined by invertible automata over $X$ act by automorphisms on the
tree $X^{*}$ and by homeomorphisms on the space $X^{\mathbb{N}}$. The automorphism
group $\Aut(X^{*})$ can be decomposed as the permutational wreath product
$\Aut(X^{*})\cong \Aut(X^{*})\wr \Sym(X)$, that explains why we have the
decomposition~(\ref{eqn_wreath_decomp}). The permutation $\pi_g$ from
(\ref{eqn_wreath_decomp}) is called the root permutation of $g$. Note that any element
$g$ can be uniquely given by the collection $(\pi_{g|_v})_{v\in X^{*}}$ of root
permutations of all sections of $g$.

\section{Proof of Theorem~\ref{thm_main}}

Let $A$ be the automaton shown in Figure~\ref{Fig_Automaton} with the set of states
$S=\{a,b,c\}$ over the alphabet $X=\{1,2,3\}$, and let $G_A$ be the group generated by
$A$. The automaton $A$ and its dual $\dual(A)$ can be given by the following wreath
recursions:
\begin{align*}
a&=(a,b,c)(2,3), & 1&=(1,3,2)(b,c),\\
b&=(c,a,b)(1,3), & 2&=(3,2,1)(a,b),\\
c&=(b,c,a)(1,2), & 3&=(2,1,3)(a,c).
\end{align*}
So that \begin{align*}
a^{-1}&=(a^{-1},c^{-1},b^{-1})(2,3), & 1^{-1}&=(1^{-1},2^{-1},3^{-1})(b,c),\\
b^{-1}&=(b^{-1},a^{-1},c^{-1})(1,3), & 2^{-1}&=(2^{-1},3^{-1},1^{-1})(a,b),\\
c^{-1}&=(c^{-1},b^{-1},a^{-1})(1,2), & 3^{-1}&=(3^{-1},1^{-1},2^{-1})(a,c).
\end{align*}

The wreath recursions for $ab^{-1},bc^{-1},ca^{-1}$:
\begin{align*}
ab^{-1}&=(ab^{-1}, bc^{-1}, ca^{-1})(1,3,2),\\
bc^{-1}&=(ca^{-1}, ab^{-1}, bc^{-1})(1,3,2),\\
ca^{-1}&=(bc^{-1}, ca^{-1}, ab^{-1})(1,3,2),
\end{align*}
imply the relations $ab^{-1}=bc^{-1}=ca^{-1}$. We denote $\alpha=ab^{-1}$. Then
$\alpha$ has order $3$ and satisfies the wreath recursion
\[
\alpha=(\alpha,\alpha,\alpha)(1,3,2).
\]
Analogously one gets relations
\[
\alpha^{-1}=ac^{-1}=ba^{-1}=cb^{-1}, \quad a^{-1}b=b^{-1}c=c^{-1}a, \quad a^{-1}c=b^{-1}a=c^{-1}b.
\]
Now it is clear that the group $G_A$ is generated by $a$ and $\alpha$. Our goal is to
prove that $G_A$ has the following presentation:
\begin{equation}\label{eqn_presentation}
G_A=\langle a,\alpha | \alpha^3, [a^{-n}\alpha a^n,a^{-m}\alpha a^m], n,m\in\mathbb{Z} \rangle\cong \mathbb{Z}_3\wr\mathbb{Z}.
\end{equation}

Let us define the subgroup $W$ of $\Aut(X^{*})$ consisting of elements $g$ such that
the root permutations of all sections of $g$ belong to $Alt_3=\{\varepsilon, (1,2,3),
(1,3,2)\}$ and for each $n\in\mathbb{N}$ the root permutations of $g|_v$ at all
vertices $v$ of $n$-th level are equal. In other words, each element $g$ of $W$ can be
given by a sequence $(\pi_1,\pi_2,\ldots)$, $\pi_i\in Alt_3$, where $\pi_i$ is the root
permutation of $g|_v$ for all vertices $v\in X^{i-1}$; such element acts on $X^{*}$ as
follows:
\[
g(x_1x_2\ldots x_n)=\pi_1(x_1)\pi_2(x_2)\ldots \pi_n(x_n)
\]
for any $x_i\in X$ and $n\in\mathbb{N}$. It is easy to see that the group $W$ is
abelian of exponent $3$. Notice that the elements $\alpha$, $\alpha^{-1}$, $a^{-1}b$,
$a^{-1}c$ belong to $W$ and that every $g\in W$ can be decomposed as $g=(h,h,h)\pi$ for
some $h\in W$ and $\pi\in Alt_3$.

\begin{lemma}\label{Lemma_about_W}
For any $g\in W$ and $x,y\in\{a,b,c\}$ the elements $x^{-1}gy$ and $xgy^{-1}$ belong to
$W$.
\end{lemma}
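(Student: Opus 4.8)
The plan is to prove the statement by a direct computation with wreath recursions, exploiting the special structure of $W$: every element of $W$ has all its sections acting through $Alt_3$ on each level, and it can always be written in the self-similar form $g=(h,h,h)\pi$ with $h\in W$ and $\pi\in Alt_3$. So the natural approach is induction on the level, reducing the claim about $x^{-1}gy$ (and $xgy^{-1}$) to the same claim one level down, where the sections are again of the form ``a word over $\{a,b,c\}^{\pm1}$ with one section of an element of $W$ sandwiched in between''.

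First I would record the wreath recursions needed. Writing $g=(h,h,h)\pi$ with $h\in W$, $\pi\in Alt_3$, and using the recursions for $a,b,c$ and their inverses given above, I would compute $x^{-1}gy$ for each of the nine pairs $(x,y)\in\{a,b,c\}^2$. The key point is that the root permutation of $x^{-1}gy$ is $\sigma_x^{-1}\pi\sigma_y$ where $\sigma_a=(2,3)$, $\sigma_b=(1,3)$, $\sigma_c=(1,2)$ are the root permutations of $a,b,c$; since these are odd permutations in $\Sym(3)$ and $\pi\in Alt_3$, the product $\sigma_x^{-1}\pi\sigma_y$ lies in $Alt_3$ — this is exactly the parity bookkeeping that makes $W$ the right invariant object. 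Then each of the three sections of $x^{-1}gy$ at a letter of $X$ is of the form $x'^{-1}hy'$ for some $x',y'\in\{a,b,c\}$ (the new letters being determined by where $\sigma_y$ and $\sigma_x$ move that coordinate, via the tuples in the recursions for $x,y$), and similarly for the section at each level below.

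From this one extracts the inductive statement precisely. Let me restate it: I claim that for every $g\in W$, every $n$, and every vertex $v\in X^n$, the section $(x^{-1}gy)|_v$ equals $x'^{-1}h'y'$ for suitable $x',y'\in\{a,b,c\}$ and $h'\in W$ (in fact $h'=g|_{w}$ for an appropriate vertex $w$, though I only need $h'\in W$). The base case $n=0$ is the element itself; the inductive step is the one-level computation above, which shows both that the root permutation of every such section lies in $Alt_3$ and that passing to a deeper section preserves the form. Since all root permutations of all sections lie in $Alt_3$, and since at level $n$ the section $(x^{-1}gy)|_v$ has root permutation $\sigma_{x_v}^{-1}\pi_n\sigma_{y_v}$ which depends on $v$ only through the pair $(x_v,y_v)$ — and one checks from the recursions that this pair, hence the permutation, is in fact the same for all $v\in X^n$ — the element $x^{-1}gy$ satisfies the defining condition of $W$. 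The argument for $xgy^{-1}$ is identical, or follows by taking inverses and relabeling since $x^{-1}gy\in W$ iff $y^{-1}g^{-1}x\in W$ and $W$ is closed under inversion.

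The main obstacle, and the only thing requiring care, is the bookkeeping that the ``level-uniformity'' condition is preserved, i.e. that the pair of boundary letters $(x_v,y_v)$ attached to the section at $v\in X^n$ does not actually depend on $v$. This needs the tuples $(a,b,c)$, $(c,a,b)$, $(b,c,a)$ (and their inverse counterparts) to interact with $\pi\in Alt_3$ coherently; concretely, one verifies that for fixed $g=(h,h,h)\pi$ the three sections of $x^{-1}gy$ at $1,2,3$ are all of the form $x''^{-1}hy''$ with the \emph{same} pair $(x'',y'')$ — this is where the cyclic structure of the tuples and the fact that $\pi$ is a $3$-cycle or trivial do the work. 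Once that single observation is in place, the induction runs automatically and the lemma follows. I would present the nine cases compactly in a table or a short display rather than writing each out in prose.
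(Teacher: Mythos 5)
Your overall strategy (write $g=(h,h,h)\pi$, compute $x^{-1}gy$ by the wreath recursion, and induct on the level to get that all root permutations of sections lie in $Alt_3$ and are constant on levels) is the paper's strategy, but the key step you single out as ``the only thing requiring care'' is resolved incorrectly, and as stated your induction breaks at the first level. You claim that for $g=(h,h,h)\pi$ the three sections of $x^{-1}gy$ at $1,2,3$ are of the form $x''^{-1}hy''$ with the \emph{same} pair $(x'',y'')$. This is false: for instance, with $\pi=\varepsilon$ one computes
\[
a^{-1}gb=(a^{-1}hc,\ c^{-1}hb,\ b^{-1}ha)\,(1,3,2),
\]
so the boundary pairs at the three coordinates are $(a,c)$, $(c,b)$, $(b,a)$ --- all different. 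Consequently the inductive invariant ``the pair $(x_v,y_v)$ is independent of $v\in X^n$'' cannot be verified and does not hold; the uniformity of the root permutation on each level has to be obtained from a coarser invariant. That coarser invariant is exactly what the paper's proof introduces: group the nine elements $x^{-1}gy$ into the three ``types'' $T_1(g)=\{a^{-1}ga,b^{-1}gb,c^{-1}gc\}$, $T_2(g)=\{a^{-1}gb,b^{-1}gc,c^{-1}ga\}$, $T_3(g)=\{a^{-1}gc,b^{-1}ga,c^{-1}gb\}$ (pairs differing by the same cyclic shift). The correct inductive statement is that the \emph{set} of sections of $x^{-1}gy$ at level $n$ equals $T_i(g_n)$ for some $i$ depending only on $n$ (checked by a $3\times 3$ table of cases in $i$ and $\pi$), and then one uses the fact that all three elements of a fixed type $T_i(g)$ have the same root permutation, which lies in $Alt_3$. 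With that modification your induction goes through and coincides with the paper's proof.

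A second, smaller point: your proposed shortcut for $xgy^{-1}$ via inversion does not work as written. Inverting $x^{-1}gy\in W$ only yields $y^{-1}g^{-1}x\in W$, i.e.\ elements of the same family again, not elements of the form $xgy^{-1}$. Your alternative remark that ``the argument for $xgy^{-1}$ is identical'' (a parallel computation with its own table, as in the paper) is the correct way to handle it.
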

\begin{proof}
For every $g\in W$, we split the elements $x^{-1}gy$ on three types:
\begin{eqnarray*}
T_1(g)=\{a^{-1}ga,b^{-1}gb,c^{-1}gc\},\\ T_2(g)=\{a^{-1}gb,b^{-1}gc,c^{-1}ga\},\\ T_3(g)=\{a^{-1}gc,b^{-1}ga,c^{-1}gb\}.
\end{eqnarray*}
We will prove that for each $n\in\mathbb{N}$ the set of sections of $x^{-1}gy$ at
vertices of $n$-th level is equal to the set $T_i(g_n)$ for some $i\in\{1,2,3\}$
depending on $n$, where $g_n$ is the section of $g$ at some vertex of $n$-th level
(they are all equal), i.e.,
\[
\{(x^{-1}gy)|_{v} : v\in X^n\}=T_i(g_n).
\]
For $n=1$, let us write $g=(h,h,h)\pi$, $h\in W$ and note that $\{ (x^{-1}gy)|_v : v\in
X \}$ is equal to $T_i(h)$ for some $i\in\{1,2,3\}$. Assume inductively that the claim
holds for level $n$. By direct computations shown in Table~\ref{Table_Computations} we
see that for every $i\in\{1,2,3\}$
\[
\{ f|_v : f\in T_i(g) \mbox{ and } v\in X  \}=T_j(h)
\]
for some $j\in\{1,2,3\}$ depending only on $i$ and $\pi$. Therefore
\[
\{(x^{-1}gy)|_{v} : v\in X^{n+1}\}=\{ f|_v : f\in T_i(g_n) \mbox{ and } v\in X \}= T_j(g_{n+1}),
\]
and the claim is proved.

Notice that for each $i\in\{1,2,3\}$ and any $g\in W$ all elements of $T_i(g)$ have the
same root permutations which belong to $Alt_3$. Hence for each $n\in\mathbb{N}$ the
root permutations of $(x^{-1}gy)|_v$ at the vertices $v$ of $n$-th level are all equal.
Therefore $x^{-1}gy\in W$ for any $x,y\in\{a,b,c\}$.

Analogously one can show that $xgy^{-1}\in W$ for any $x,y\in\{a,b,c\}$.
\end{proof}

\begin{remark}
It follows from the lemma that actually all elements in each $T_i(g)$ are equal, because they have the same set of permutation on each level:
\[
a^{-1}ga=b^{-1}gb=c^{-1}gc, \qquad a^{-1}gb=b^{-1}gc=c^{-1}ga, \qquad a^{-1}gc=b^{-1}ga=c^{-1}gb.
\]
Therefore $T_1,T_2,T_3$ can be considered as transformations of $W$. Since every
element in $W$ can be uniquely represented by a sequence of permutations
$(\pi_1,\pi_2,\ldots)$, $\pi_{i}\in Alt_3$, i.e., an element from $Alt_3^{\mathbb{N}}$,
the $T_1,T_2,T_3$ can be viewed as transformations of $Alt_3^{\mathbb{N}}$.
Table~\ref{Table_Computations} shows that these transformations satisfy the wreath
recursion
\begin{align*}
T_1&=(T_1,T_2,T_3)( \tau_2, \tau_3 )\\
T_2&=(T_3,T_1,T_2)( \tau_1, \tau_3), \\
T_3&=(T_2,T_3,T_1)( \tau_1, \tau_3)
\end{align*}
where $\tau_1=\varepsilon, \tau_2=(1,2,3), \tau_3=(1,3,2)$. Interestingly, this
recursion repeats the wreath recursion for the automaton $A$ if we identify $\tau_i$
with $i$.

We can apply this observation to elements of $G_A$ of the form $u^{-1}v$ for
$u,v\in\{a,b,c\}^n$. In this case $u^{-1}v$ corresponds to a word $w$ in the alphabet
$\{T_1, T_2, T_3\}$ of length $n$, because we can write $u^{-1}v$ as the progressive
composition of some conjugations of the form $x^{-1}y$. For example:
\[
a^{-1}b^{-1}b^{-1}c^{-1}abaa=T_2T_1T_3T_1(e).
\]
The trivial element $e\in W$ is represented by the sequence
$\tau_1^{\infty}=\tau_1\tau_1\ldots$. Therefore the sequence of permutations
representing $u^{-1}v$ is equal to the image $w(\tau_1^{\infty})$.
\end{remark}


For a word $w$ over $\{a^{\pm 1},b^{\pm 1},c^{\pm 1}\}$ we denote by $ord(w)$ the sum
of exponents of letters in $w$. For example, $ord(a^{-1}b^3cb^{-2})=-1+3+1-2=1$.

\begin{lemma}\label{Lemma_ord(w)=0}
If an element $g\in G_{A}$ can be represented by a word over $\{a^{\pm 1},b^{\pm 1},c^{\pm
1}\}$ with $ord(w)=0$, then $g\in W$. In particular, $g^3=e$ and such elements commute
with each other.
\end{lemma}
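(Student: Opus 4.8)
The plan is to reduce everything to the case of words of the form $u^{-1}v$ with $u,v\in\{a,b,c\}^n$, to which the remark preceding the lemma applies directly. First I would observe that a word $w$ over $\{a^{\pm1},b^{\pm1},c^{\pm1}\}$ with $ord(w)=0$ can be rewritten, using only the group relations already available (in fact using only the trivial identities $x^{-1}=x^{-1}$ and insertion of $yy^{-1}=e$), into the shape $u^{-1}v$ for suitable $u,v\in\{a,b,c\}^{n}$ with $|u|=|v|=n$. The point is that each letter $x$ (exponent $+1$) and each letter $x^{-1}$ (exponent $-1$) can be collected: since the total exponent is zero, the number of positive letters equals the number of negative letters, and by conjugating/shuffling one sees that $g$ represented by such a $w$ equals some $u^{-1}v$. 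I would spell this out by induction on the length of $w$: if $w$ begins with a positive letter $x$, write $w=x w'$ where $w'$ has $ord(w')=-1$; since $ord(w')\neq 0$ we cannot apply induction directly, so instead I would pair up the first positive letter with, say, the first negative letter occurring in $w$ and use that a product $x y^{-1}$ or $x^{-1}y$ over $\{a,b,c\}$ is exactly one of the conjugation-type building blocks, eventually expressing $g$ as a progressive composition of blocks of the form $x^{-1}y$ acting on $e$.

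Once $g$ is written as $u^{-1}v$ with $u,v\in\{a,b,c\}^{n}$, I invoke the remark: $u^{-1}v$ is obtained from the trivial element $e\in W$ by applying a length-$n$ word $\mathsf{w}$ in the transformations $T_1,T_2,T_3$, and the sequence of root permutations representing $u^{-1}v$ is $\mathsf{w}(\tau_1^{\infty})$. By Lemma~\ref{Lemma_about_W}, each $T_i$ maps $W$ into $W$, hence $u^{-1}v\in W$, which gives $g\in W$. Since $W$ is abelian of exponent $3$ (as noted right before Lemma~\ref{Lemma_about_W}), we immediately get $g^3=e$ and that any two such elements commute.

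The main obstacle I anticipate is the rewriting step: carefully justifying that an arbitrary $ord$-zero word reduces to the canonical form $u^{-1}v$ using only legitimate group manipulations, rather than hand-waving. The cleanest route is probably not to manipulate the word as a string but to argue directly on the level of the group: define, for each $g\in G_A$, a candidate decomposition and show that the set of elements representable by $ord$-zero words is closed under multiplication and inversion and contains the generators $x y^{-1}$, $x^{-1}y$ for $x,y\in\{a,b,c\}$; since these generate an $ord$-zero subgroup and $W$ is a subgroup containing all of them (the elements $\alpha^{\pm1}, a^{-1}b, a^{-1}c$ are in $W$, and by Lemma~\ref{Lemma_about_W} products $x^{-1}gy$ stay in $W$), every $ord$-zero element lies in $W$. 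In fact this is the slicker formulation: the $ord$-zero words represent precisely the subgroup generated by $\{x^{-1}y : x,y\in\{a,b,c\}\}$, an induction using Lemma~\ref{Lemma_about_W} shows this subgroup is contained in $W$, and the exponent-$3$ abelian conclusion is then automatic.
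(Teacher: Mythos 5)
There is a genuine gap: the reduction of an arbitrary $ord$-zero word to a shape where Lemma~\ref{Lemma_about_W} (or the remark) applies is the real content of the lemma, and both routes you propose for it fail. The first claim --- that any word of zero exponent sum can be brought to the shape $u^{-1}v$ with $u,v\in\{a,b,c\}^{n}$ ``using only the trivial identities'' --- is false at the level of the free group: an element is freely of that shape only if its reduced form is a block of inverse letters followed by a block of positive letters, so already $ab^{-1}$, and more seriously $a\alpha a^{-1}=a^{2}b^{-1}a^{-1}$, are $ord$-zero words that cannot be so rewritten without invoking relations of $G_A$ that are not yet established (establishing them is essentially what the lemma is for). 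The second, ``slicker'' claim --- that the $ord$-zero words represent precisely the subgroup generated by $\{x^{-1}y:\ x,y\in\{a,b,c\}\}$ --- is also false: the set of elements admitting an $ord$-zero representative is the image of the kernel of the exponent-sum map on the free group, i.e.\ the subgroup generated by \emph{all conjugates} of the length-two balanced words, not by the finitely many elements $x^{\pm1}y^{\mp1}$ themselves. With the relations already available, the latter all lie in $\langle\alpha,\ a^{-1}\alpha a\rangle$, whereas the lemma must apply to every $a^{-n}\alpha a^{n}$ (this is exactly how it is used immediately afterwards), and by the main theorem these do not lie in $\langle\alpha, a^{-1}\alpha a\rangle$ for $|n|\geq 2$. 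So neither route reaches the elements the lemma is actually needed for; only your middle step is sound (once $g$ has the form $u^{-1}v$, iterated application of Lemma~\ref{Lemma_about_W} indeed places it in $W$).

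What is missing is the paper's induction on the length of the word together with the fact that $W$ is a \emph{subgroup} of $\Aut(X^{*})$. If the first and last letters of $w$ have opposite signs, then $w=x^{-1}gy$ or $w=xgy^{-1}$ with $g$ balanced and shorter, and Lemma~\ref{Lemma_about_W} applies; the crucial remaining case is a balanced word beginning and ending with letters of the same sign (type $xvy$ with $ord(v)=-2$, or $x^{-1}vy^{-1}$ with $ord(v)=2$). The paper splits such a word into a concatenation of shorter balanced blocks of the form $xuy^{-1}$ and $x^{-1}uy$, each of which lies in $W$ by the inductive hypothesis and Lemma~\ref{Lemma_about_W}, and then concludes because $W$ is closed under products. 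Your proposal never uses this closure of $W$ under multiplication, and without it (or an equivalent device) the passage from the conjugation-shaped words covered by Lemma~\ref{Lemma_about_W} to arbitrary $ord$-zero words cannot be completed.
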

\begin{proof}

We prove the lemma by induction on the length of $g$. The statement holds for elements
$x^{-1}y$ and $xy^{-1}$ for all $x,y\in\{a,b,c\}$. If we assume that the statement
holds for elements $g$ of length $\leq 2n$, then it holds for elements $x^{-1}gy$ and
$xgy^{-1}$ by Lemma~\ref{Lemma_about_W}. So let us prove the statement for words
starting and ending with either both letters in $X$ or both letters in $X^{-1}$. These
words are either of type $xvy$ with $ord(v)=-2$ or of type $x^{-1}vy^{-1}$ with
$ord(v)=2$. It is easy to show by induction, that if $|w|=2(n+1)$, $w=xvy$ with
$ord(v)=-2$ (resp. $w=x^{-1}vy^{-1}$ with $ord(v)=2$) then $w$ is a concatenation of
words of the form $xuy^{-1}$ and $x^{-1}uy$ (resp. $x^{-1}uy$ and $xuy^{-1}$) for some
$u$ with $ord(u)=0$ and length $\leq 2n$. The statement follows.
\end{proof}

Notice that all elements $a^{-n}\alpha a^n$ from presentation (\ref{eqn_presentation})
satisfy the condition of Lemma~\ref{Lemma_ord(w)=0}.

\begin{table}[!t]
 \centering
\begin{tabular}{|c|l|}
\hline
& \hspace{2.5cm} $g=(h,h,h)\pi$\\ \hline
$\pi=\varepsilon$ &
$\begin{array}{cl}
                 &\vspace{-0.3cm}\\
& a^{-1}ga=(a^{-1}ha,c^{-1}hc,b^{-1}hb)\\
\mbox{Type I}: & b^{-1}gb=(b^{-1}hb,a^{-1}ha,c^{-1}hc)\\
& c^{-1}gc=(c^{-1}hc,b^{-1}hb,a^{-1}ha)\\
& \\
& a^{-1}gb=(a^{-1}hc,c^{-1}hb,b^{-1}ha)(1,3,2)\\
\mbox{Type II}: & b^{-1}gc=(b^{-1}ha,a^{-1}hc,c^{-1}hb)(1,3,2)\\
& c^{-1}ga=(c^{-1}hb,b^{-1}ha,a^{-1}hc)(1,3,2)\\
& \\
& a^{-1}gc=(a^{-1}hb,c^{-1}ha,b^{-1}hc)(1,2,3)\\
\mbox{Type III}: & b^{-1}ga=(b^{-1}hc,a^{-1}hb,c^{-1}ha)(1,2,3)\\
& c^{-1}gb=(c^{-1}ha,b^{-1}hc,a^{-1}hb)(1,2,3)\\
&\vspace{-0.3cm}
\end{array}$\\ \hline
$\pi=(1,2,3)$ &
$\begin{array}{cl}
                 &\vspace{-0.3cm}\\
& a^{-1}ga=(a^{-1}hb,c^{-1}ha,b^{-1}hc)(1,3,2)\\
\mbox{Type I}: & b^{-1}gb=(b^{-1}hc,a^{-1}hb,c^{-1}ha)(1,3,2)\\
& c^{-1}gc=(c^{-1}ha,b^{-1}hc,a^{-1}hb)(1,3,2)\\
& \\
& a^{-1}gb=(a^{-1}ha,c^{-1}hc,b^{-1}hb)(1,2,3)\\
\mbox{Type II}: & b^{-1}gc=(b^{-1}hb,a^{-1}ha,c^{-1}hc)(1,2,3)\\
& c^{-1}ga=(c^{-1}hc,b^{-1}hb,a^{-1}ha)(1,2,3)\\
& \\
& a^{-1}gc=(a^{-1}hc,c^{-1}hb,b^{-1}ha)\\
\mbox{Type III}: & b^{-1}ga=(b^{-1}ha,a^{-1}hc,c^{-1}hb)\\
& c^{-1}gb=(c^{-1}hb,b^{-1}ha,a^{-1}hc) \\
&\vspace{-0.3cm}
\end{array}$\\ \hline
$\pi=(1,3,2)$ &
$\begin{array}{cl}
                 &\vspace{-0.3cm}\\
& a^{-1}ga=(a^{-1}hc,c^{-1}hb,b^{-1}ha)(1,2,3)\\
\mbox{Type I}: & b^{-1}gb=(b^{-1}ha,a^{-1}hc,c^{-1}hb)(1,2,3)\\
& c^{-1}gc=(c^{-1}hb,b^{-1}ha,a^{-1}hc)(1,2,3)\\
& \\
& a^{-1}gb=(a^{-1}hb,c^{-1}ha,b^{-1}hc)\\
\mbox{Type II}: & b^{-1}gc=(b^{-1}hc,a^{-1}hb,c^{-1}ha)\\
& c^{-1}ga=(c^{-1}ha,b^{-1}hc,a^{-1}hb)\\
& \\
& a^{-1}gc=(a^{-1}ha,c^{-1}hc,b^{-1}hb)(1,3,2)\\
\mbox{Type III}: & b^{-1}ga=(b^{-1}hb,a^{-1}ha,c^{-1}hc)(1,3,2)\\
& c^{-1}gb=(c^{-1}hc,b^{-1}hb,a^{-1}ha)(1,3,2)\\
&\vspace{-0.3cm}
\end{array}$\\ \hline
\end{tabular}
\caption{Decomposition of $x^{-1}gy$ for $x,y\in\{a,b,c\}$ and $g\in
W$}\label{Table_Computations}
\end{table}

\begin{corollary}
The relations $[a^{-n}\alpha a^n,a^{-m}\alpha a^m]=e$ hold in the group $G_A$.
\end{corollary}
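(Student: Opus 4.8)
The plan is to deduce this statement directly from Lemma~\ref{Lemma_ord(w)=0}, essentially with no extra work. The first step is to exhibit, for each $n\in\mathbb{Z}$, a representative word for $a^{-n}\alpha a^{n}$ over $\{a^{\pm1},b^{\pm1},c^{\pm1}\}$ whose order (sum of exponents) is $0$. Using the identity $\alpha=ab^{-1}$ established earlier, one writes $a^{-n}\alpha a^{n}=a^{1-n}b^{-1}a^{n}$, and then $ord=(1-n)+(-1)+n=0$. This is exactly the observation recorded in the remark immediately following Lemma~\ref{Lemma_ord(w)=0}, so it may simply be cited.

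The second step is to apply Lemma~\ref{Lemma_ord(w)=0}: since $a^{-n}\alpha a^{n}$ and $a^{-m}\alpha a^{m}$ are both represented by order-$0$ words over $\{a^{\pm1},b^{\pm1},c^{\pm1}\}$, both elements lie in the subgroup $W\leq\Aut(X^{*})$. The third and final step is to invoke that $W$ is abelian (it was noted that $W$ is abelian of exponent $3$, and the last sentence of Lemma~\ref{Lemma_ord(w)=0} records that any two elements satisfying its hypothesis commute). Therefore $[a^{-n}\alpha a^{n},a^{-m}\alpha a^{m}]=e$ in $G_A$ for all $n,m\in\mathbb{Z}$, which is the assertion of the corollary.

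There is no real obstacle to overcome here, since the substantive content has already been absorbed into Lemma~\ref{Lemma_about_W} (stability of $W$ under the conjugation maps $g\mapsto x^{-1}gy$ and $g\mapsto xgy^{-1}$) and Lemma~\ref{Lemma_ord(w)=0} (membership in $W$ for order-$0$ words). The only point that requires a moment's care is the purely formal one: verifying that the conjugate $a^{-n}\alpha a^{n}$, once $\alpha$ is expanded as $ab^{-1}$, really is an order-$0$ word, so that Lemma~\ref{Lemma_ord(w)=0} genuinely applies; after that the commutativity of $W$ finishes the argument in one line.
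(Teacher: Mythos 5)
Your proposal is correct and matches the paper exactly: the paper offers no separate proof beyond the remark that each $a^{-n}\alpha a^{n}$ (written via $\alpha=ab^{-1}$ as an order-$0$ word) satisfies the hypothesis of Lemma~\ref{Lemma_ord(w)=0}, whose conclusion that such elements commute immediately gives the corollary. Your only addition is spelling out the routine verification $ord\bigl(a^{1-n}b^{-1}a^{n}\bigr)=0$, which the paper leaves implicit.
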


\begin{lemma}
The group $G_A$ acts transitively on $X^n$ for every $n\in\mathbb{N}$.
\end{lemma}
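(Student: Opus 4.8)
The plan is to reduce the lemma to a single combinatorial property of $A$ and then establish that property by induction on $n$.

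First I would use the remark about the transformations $T_1,T_2,T_3$ at the level of level-$n$ words. Any element $u^{-1}v$ with $u,v\in\{a,b,c\}^{n}$ has $ord(u^{-1}v)=0$, hence lies in $W$ by Lemma~\ref{Lemma_ord(w)=0}; under the identification of $W$ with $Alt_3^{\mathbb N}$ and $\tau_i\leftrightarrow i$ (which turns the wreath recursion of $T_1,T_2,T_3$ into that of $A$), it is represented by the sequence $\widetilde w(1^{\infty})$, where $\widetilde w\in\{a,b,c\}^{n}$ is the state word coming from the $\{T_1,T_2,T_3\}$-word of $u^{-1}v$, acting on words over $X$ as in $A$. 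Since an element of $W$ acts on $X^{n}$ by applying its first $n$ root permutations coordinatewise, $(u^{-1}v)(1^{n})=\widetilde w(1^{n})$, and for fixed $u$ the map $v\mapsto\widetilde w$ is a bijection of $\{a,b,c\}^{n}$. Hence the $G_A$-orbit of $1^{n}$ contains $\{\widetilde w(1^{n}):\widetilde w\in\{a,b,c\}^{n}\}$, so it suffices to show that $\Phi_n\colon\{a,b,c\}^{n}\to X^{n}$, $\widetilde w\mapsto\widetilde w(1^{n})$, is onto; since both sides have $3^{n}$ elements, this is the same as $\Phi_n$ being a bijection.

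It is convenient to linearize this. Using $b=\alpha^{-1}a$, $c=\alpha a$, the relation $\alpha^{3}=e$, and that $W$ is abelian of exponent $3$, for fixed $u$ the set $\{u^{-1}v:v\in\{a,b,c\}^{n}\}$ is the subgroup of $W$ spanned over $\mathbb{Z}/3\mathbb{Z}$ by the truncations $\mathbf v_k|_{n}$ of $n$ consecutive sequences $\mathbf v_k$, where $\mathbf v_k\in(\mathbb{Z}/3\mathbb{Z})^{\mathbb N}$ is the root-permutation sequence of $a^{-k}\alpha a^{k}$ — equivalently, in the automaton picture above, $\mathbf v_k=a^{k}(3^{\infty})$, with $\mathbf v_0$ constant and $\mathbf v_{k+1}=a(\mathbf v_k)$. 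So the $G_A$-orbit of $1^{n}$ contains $\mathrm{span}_{\mathbb{Z}/3\mathbb{Z}}\{\mathbf v_k|_n\}$, and the lemma reduces to: the $n\times n$ matrix over $\mathbb{Z}/3\mathbb{Z}$ with rows $n$ consecutive $\mathbf v_k|_{n}$ is invertible. (In particular this would show that the normal closure of $\alpha$ alone is already transitive on each $X^{n}$.) I would prove this by induction on $n$ by Gaussian elimination, using $\mathbf v_{k+1}=a(\mathbf v_k)$ to show that after clearing the first $n-1$ columns the last row still has a nonzero entry in column $n$, so the pivots never vanish; the base case $n=1$ is simply that $s\mapsto\pi_s(1)$ is a bijection $\{a,b,c\}\to X$, which holds because each letter of $X$ is fixed by exactly one of the transpositions $\pi_a=(2,3)$, $\pi_b=(1,3)$, $\pi_c=(1,2)$.

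The hard part is exactly this last step — bijectivity of $\Phi_n$, equivalently the non-vanishing of those determinants uniformly in $n$; everything before it is formal. The content is that running $A$ for $n$ steps genuinely spreads, with no collapse of outputs and no linear dependence among the shifted sequences $\mathbf v_k$, and this is where the relation $\alpha^{3}=e$ and the rotation identities among the $x^{-1}y$ (carried by the wreath recursion of $A$ through $T_1,T_2,T_3$) are used in an essential way. As an alternative to the determinant computation, one could instead prove directly by induction on $n$ the stronger statement that $\widetilde w\mapsto\widetilde w(x_1\cdots x_n)$ is a bijection $\{a,b,c\}^{n}\to X^{n}$ for every input word $x_1\cdots x_n$, peeling off one state at a time and using the bireversibility of $A$ to control sections versus letters.
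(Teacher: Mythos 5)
Your reduction is correct as far as it goes, and in fact it is more elaborate than it needs to be: once you aim to show that $\Phi_n\colon\{a,b,c\}^n\to X^n$, $\widetilde w\mapsto \widetilde w(1^n)$, is onto, that statement by itself already gives transitivity (the orbit of $1^n$ under positive words of length $n$ would be all of $X^n$), so the detour through $W$, the $T_i$, and the $\mathbb{F}_3$-span of the truncated sequences $\mathbf v_k|_n$ is a reformulation rather than progress. The genuine gap is exactly where you locate it yourself: the surjectivity of $\Phi_n$, equivalently the invertibility over $\mathbb{F}_3$ of the $n\times n$ matrix of truncations, is asserted but never proved. ``Induction by Gaussian elimination, showing the pivots never vanish'' is a restatement of the claim, not an argument: the recursion $\mathbf v_{k+1}=a(\mathbf v_k)$ is an automaton recursion, not a linear one --- the $j$-th letter of $\mathbf v_{k+1}$ depends on the whole prefix of $\mathbf v_k$ through the current state --- so there is no visible reason why clearing the first $n-1$ columns leaves a nonzero entry in the last column. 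Your proposed alternative (that $\widetilde w\mapsto\widetilde w(x_1\cdots x_n)$ is a bijection for \emph{every} input word) is a strictly stronger statement that does not follow from bireversibility alone (the identity automaton on three states is bireversible and fails it badly), so it too would need an argument specific to $A$, which you do not supply.

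For comparison, the paper sidesteps this combinatorial core entirely. It observes that $a^3,b^3,c^3$ permute the three vertices $\{111,211,311\}$ by the \emph{same} wreath recursion as $A$ itself; iterating, $a^{3^k},b^{3^k},c^{3^k}$ permute a triple of vertices below $1^{3^k}$ with the same recursion, whence $b^{2\cdot 3^k}$ stabilizes $1^{3^k}$ and its section there equals $c^{3^k}b^{3^k}$, which acts as a $3$-cycle on $X$. This exhibits explicit elements of $St_{G_A}(1^n)$ acting transitively on $1^nX$ (first for $n=3^k$, then for all $n$ by taking sections at $1^i$), and transitivity on $X^n$ follows by the standard induction you could then run. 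To salvage your linear-algebra route you would have to actually establish the nonvanishing of those determinants uniformly in $n$ --- for instance by finding a genuine linear recursion for the matrices --- and until that is done the proposal proves only the (correct) reduction, not the lemma.
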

\begin{proof}
We will prove that the stabilizer $St_{G_{A}}(1^n)$ of the vertex $1^n$ acts transitively on
$1^nX$ for each $n\in\mathbb{N}$. The statement immediately follows from this claim.
Indeed, let $u,v\in X^n$ and we want to find an element $g\in G_A$ such that $g(u)=v$.
We proceed by induction on the length $|u|=|v|=n$. If $n=1$ the claim follows from the
transitivity of $G_A$ on $X$. Let $|u|=|v|=n+1>1$ with $u=u'x$ and $v=v'y$, where
$u',v'\in X^n$ and $x,y\in X$. By induction there exist $h,k\in G_A$ such that
$h(u')=k(v')=1^n$. Let $x'=h|_{u'}(x)$ and $y'=k|_{v'}(y)$. Since $St_{G_{A}}(1^n)$ is
transitive on $1^nX$, there is $s\in St_{G_{A}}(1^n)$ such that $s(1^nx')=1^ny'$. Then if we
put $g=hsk^{-1}$, we have $g(u)=v$.

Let us prove the claim. Notice that $a^3,b^3,c^3$ preserve the vertex set
$X11=\{x_1=111,y_1=211,z_1=311\}$, and the action restricted to $\{x_1,y_1,z_1\}$ can
be described by the wreath recursion
\begin{align*}
a^3&=(a^3,b^3,c^3)(y_1,z_1),\\
b^3&=(c^3,a^3,b^3)(x_1,z_1),\\
c^3&=(b^3,c^3,a^3)(x_1,y_1).
\end{align*}
which repeat the wreath recursion for the automaton $A$. It immediately follows by
induction that if we denote
\begin{eqnarray*}
x_k=1^{3^k}, y_k=21^{3^{k-1}+2}, z_k=31^{3^{k-1}+2}\quad \mbox{ and }\quad
a_k=a^{3^k}, b_k=b^{3^k}, c_k=c^{3^k},
\end{eqnarray*}
then $a_k,b_k,c_k$ preserve the vertex set $\{x_k,y_k,z_k\}$ with the recursion
\begin{align*}
a_k&=(a_k,b_k,c_k)(y_k,z_k),\\
b_k&=(c_k,a_k,b_k)(x_k,z_k),\\
c_k&=(b_k,c_k,a_k)(x_k,y_k).
\end{align*}
It follows that
\[
b_k^2(x_k)=x_k \quad \mbox{ and } \quad b_k^2|_{x_k}=c_kb_k=c^{3^k}b^{3^k}.
\]
The element $c^{3^k}b^{3^k}$ acts as permutation $(1,2,3)$ on $X$. Therefore $b_k^2$
stabilizes $x_k$ and acts transitively on the set $x_kX$. This means that the
stabilizer of $1^n$ acts transitively on $1^nX$ for each $n=3^k$, $k\in\mathbb{N}$. By
taking sections at vertices $1^i$ we get that $St_{G_{A}}(1^{3^{k}-i})$ is transitive on $1^{3^{k}-i}X$ which implies our claim for each $n\in\mathbb{N}$.
\end{proof}

\begin{corollary}\label{Cor_sections}
Any word of length $n$ over $\{a,b,c\}$ is a section of any other word of length $n$
over $\{a,b,c\}$.
\end{corollary}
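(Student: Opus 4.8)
The plan is to translate the statement to the dual automaton $\partial(A)$. By the correspondence between sections and the action of $\partial(A)$ recalled in the preliminaries, for a vertex $v\in X^{*}$ and a word $w$ over $\{a,b,c\}$ one has $w|_{v}=v(w)$, where $v$ is regarded as an element of $G_{\partial(A)}$ acting on words over $\{a,b,c\}$. Hence the corollary is equivalent to the assertion that for all $w,w'\in\{a,b,c\}^{n}$ there is a vertex $v\in X^{*}$ with $v(w')=w$, i.e.\ that the vertices $v\in X^{*}$ act transitively on $\{a,b,c\}^{n}$ through the dual action $w\mapsto w|_{v}$. Since $A$ is self-dual (the relabeling $a\mapsto 1$, $b\mapsto 3$, $c\mapsto 2$ carries $A$ onto $\partial(A)$, as noted in the introduction), the permutation action of $G_{\partial(A)}$ on words over $\{a,b,c\}$ is isomorphic to that of $G_{A}$ on words over $X$; hence the previous lemma gives that $G_{\partial(A)}$ acts transitively on $\{a,b,c\}^{n}$ for every $n$.

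It then remains to pass from transitivity of the group $G_{\partial(A)}$ to transitivity of the sub-monoid of vertices $X^{*}$ on the finite set $\{a,b,c\}^{n}$. Fix $w'\in\{a,b,c\}^{n}$ and let $O\subseteq\{a,b,c\}^{n}$ be its orbit under $X^{*}$, that is, the set of all $w'|_{v}$ with $v\in X^{*}$. Each generator $x\in X$ of $G_{\partial(A)}$ acts on the finite set $\{a,b,c\}^{n}$ as a bijection (because $\partial(A)$ is invertible), and from $x(O)\subseteq O$ together with finiteness we get $x(O)=O$; hence $O$ is invariant under the whole group $G_{\partial(A)}$, so $O=\{a,b,c\}^{n}$ by the transitivity just established. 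In particular, for every $w\in\{a,b,c\}^{n}$ there is a vertex $v$ with $w'|_{v}=v(w')=w$, which is exactly the statement of the corollary.

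I do not expect a genuine obstacle. The translation through the dual automaton is routine given the preliminaries and the self-duality of $A$, and the only mildly delicate point — that a generating sub-monoid of a transitive group need not be transitive in general — disappears here because on each finite level $\{a,b,c\}^{n}$ every generator acts with finite order, so $x^{-1}$ agrees on $\{a,b,c\}^{n}$ with a suitable positive power of $x$ (equivalently, as above, the sub-monoid orbit of $w'$ is already invariant under the full group).
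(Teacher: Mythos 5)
Your proof is correct and follows essentially the same route as the paper: translate sections into the action of the self-dual automaton $\partial(A)$, invoke the transitivity lemma, and pass from transitivity of the group $G_{\partial(A)}$ to transitivity of the monoid of vertices $X^{*}$. The only difference is that you spell out the finite-orbit argument for that last passage, which the paper states without proof as ``therefore the semigroup generated by $X$ acts transitively.''
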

\begin{proof}
Since the automaton $A$ is equivalent to its dual automaton $\dual(A)$, the group
generated by $\dual(A)$ acts transitively on $\{a,b,c\}^n$ for each $n\in\mathbb{N}$.
Therefore the semigroup generated by $X$ acts transitively on $\{a,b,c\}^n$. This means
that for any $w,w'\in\{a,b,c\}^n$ there exists $v\in X^{*}$ such that $w|_v=w'$.
\end{proof}

\begin{corollary}\label{Cor_free_semigroup}
The semigroup generated by $a,b,c$ is free.
\end{corollary}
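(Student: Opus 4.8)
The plan is to show that the canonical monoid homomorphism $\{a,b,c\}^{*}\to G_A$ is injective, i.e.\ that distinct words over $\{a,b,c\}$ represent distinct elements of $G_A$. First I would dispose of words of unequal length. Here I use that $a,b,c$ have infinite order, which is already contained in the proof of the preceding lemma: there $b^{2\cdot 3^{k}}$ was shown to fix the vertex $1^{3^{k}}$ and to act on its three children as the $3$-cycle $(1,2,3)$, so $b^{2\cdot 3^{k}}$ has order divisible by $3$ for every $k$, which is impossible if $b$ has finite order, and the same computation applies to $a$ and $c$. Now suppose positive words $u,v$ satisfy $u=v$ in $G_A$ with $|u|<|v|$. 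Then the word $a^{|v|-|u|}v^{-1}u$ has $ord$ equal to $0$ and represents $a^{|v|-|u|}$, so by Lemma~\ref{Lemma_ord(w)=0} the element $a^{|v|-|u|}$ lies in $W$ and hence satisfies $(a^{|v|-|u|})^{3}=e$, contradicting infinite order. So two positive words representing the same element must have the same length.

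It remains to treat the equal-length case, say $u,v\in\{a,b,c\}^{n}$. Here $u=v$ in $G_A$ iff $u^{-1}v=e$; since $ord(u^{-1}v)=0$, Lemma~\ref{Lemma_ord(w)=0} gives $u^{-1}v\in W$, and by the Remark following Lemma~\ref{Lemma_about_W} the element $u^{-1}v\in W$ is represented by the sequence $w(\tau_{1}^{\infty})\in Alt_{3}^{\mathbb N}$, where $w$ is the length-$n$ word over $\{T_{1},T_{2},T_{3}\}$ read off from the pairs of letters of $u$ and $v$. For fixed $u$ the assignment $v_{i}\mapsto w_{i}$ is a bijection $\{a,b,c\}\to\{T_{1},T_{2},T_{3}\}$ in each coordinate, so $w$ ranges over all of $\{T_{1},T_{2},T_{3}\}^{n}$, equals $T_{1}^{n}$ exactly when $u=v$ as words, and — since the $T_{i}$ obey the same wreath recursion as the states of $A$ — one has $w(\tau_{1}^{\infty})=\tau_{1}^{\infty}$ iff the corresponding length-$n$ word over $\{a,b,c\}$ fixes $1^{\infty}\in X^{\mathbb N}$. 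Thus the equal-length case is equivalent to the single assertion: \emph{the only positive word of length $n$ fixing $1^{\infty}$ is $a^{n}$.} (Alternatively, one can avoid the $T_i$ machinery and reach the same point via Corollary~\ref{Cor_sections}: the relation "same transformation" on $\{a,b,c\}^n$ is invariant under the transitive section-action of $G_{\partial(A)}$, so its classes form a block system of equal, hence $3$-power, size; non-freeness would put a word $w\ne a^n$ in the class of $a^n$, and cancelling maximal powers of $a$ on both sides reduces to a positive word $w_0$ of length $m\ge 2$, with first and last letters $\ne a$, satisfying $w_0=a^m$ in $G_A$ and therefore fixing $1^\infty$.)

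The remaining step — proving that no positive word except $a^{n}$ fixes $1^{\infty}$ — is the one I expect to be the real obstacle; everything before it is bookkeeping with sections, $ord$ and $W$. The ingredients available are the explicit values $a(1^{\infty})=1^{\infty}$, $b(1^{\infty})=(32)^{\infty}$, $c(1^{\infty})=(23)^{\infty}$ (and $a^{-1}(1^{\infty})=1^{\infty}$, $b^{-1}(1^{\infty})=3^{\infty}$, $c^{-1}(1^{\infty})=2^{\infty}$). If $p=s_{1}\cdots s_{n}$ fixes $1^{\infty}$ then $s_{2}\cdots s_{n}(1^{\infty})=s_{1}^{-1}(1^{\infty})$, which is $1^{\infty}$ when $s_{1}=a$ (giving an induction on $n$) and is $3^{\infty}$ or $2^{\infty}$ when $s_{1}\in\{b,c\}$; so it suffices to show that no positive word sends $1^{\infty}$ to $3^{\infty}$ or to $2^{\infty}$. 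The route I would carry out is: take $p$ fixing $1^{\infty}$ with $s_{1}\ne a$, observe that the first letter of $p|_{1}$ is again $\ne a$, pass (finitely many words of length $n$) to a section $p|_{1^{k}}$ with $p|_{1^{k+d}}=p|_{1^{k}}$, and then show that such a periodic positive word cannot fix $1^{\infty}$ by tracking its action on $1^{\infty}$ with the recursions above. This last combinatorial lemma is the genuine difficulty of the corollary.
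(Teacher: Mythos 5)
Your reductions are correct and, in fact, your reformulation is exactly equivalent to the corollary, but that is also the problem: the statement you isolate as ``the remaining step'' --- that the only positive word of length $n$ fixing $1^{\infty}$ is $a^{n}$ --- is not bookkeeping to be deferred, it \emph{is} the corollary, restated via the $T_i$/dual correspondence. You explicitly leave it unproved, and your sketched route (pass to an eventually periodic section $p|_{1^{k}}$ and rule out periodic words by tracking $b(1^{\infty})=(32)^{\infty}$, $c(1^{\infty})=(23)^{\infty}$) is not carried out; periodicity of the section sequence does not by itself produce a contradiction. The paper closes precisely this gap with a short minimality argument that you could graft onto your formulation: take a \emph{shortest} equal-length relation, use Corollary~\ref{Cor_sections} to transport it to a relation $a^{n}=_{G_A}u$ where $u$ begins with $b$ or $c$, note that the section at $11$ preserves both the element represented (since $a^{n}$, hence $u$, fixes $11$) and the first letter of $u$ (since $b|_{11}=b$ and $c|_{11}=c$), so minimality forces $u|_{11}=u$ \emph{as a word}; the finite check $ba|_{11}=bc$, $bb|_{11}=ba$, $bc|_{11}=bb$, $ca|_{11}=cb$, $cb|_{11}=cc$, $cc|_{11}=ba$ shows the second letter always changes. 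Without this (or some substitute), your proof is incomplete at its essential step.

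Two smaller points. First, your unequal-length argument rests on $a$ having infinite order, justified by ``the same computation applies to $a$''; as stated this fails, because $a_k=(a_k,b_k,c_k)(y_k,z_k)$ fixes $x_k$, so $a_k^{2}|_{x_k}=a_k^{2}$ has trivial root permutation --- one must instead look at $a_k^{2}|_{y_k}=b_kc_k$ (a $3$-cycle on $X$), or deduce the infinite order of $a$ from that of $b$ via $ab^{-1}=\alpha$ and Lemma~\ref{Lemma_ord(w)=0}. The paper sidesteps this entirely: from $v_1^{-1}w=v_2$ with $ord(v_1^{-1}w)=0$ it gets $v_2^{3}=e$ for a non-empty positive word $v_2$, contradicting the fact that every non-empty positive word has the non-trivial section $a^{|v_2|-1}b$ (again by Corollary~\ref{Cor_sections}). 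Second, minor but worth noting: the wreath recursion for $T_3$ printed in the Remark has a typo ($(\tau_1,\tau_3)$ should be $(\tau_1,\tau_2)$ to match $c=(b,c,a)(1,2)$), so if you do pursue the $T_i$ route you should recompute it from Table~\ref{Table_Computations} rather than quote it.
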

\begin{proof}
Every non-empty word $w$ over $\{a,b,c\}$ is a non-trivial element, because $w$ has a
non-trivial section $a^nb$, $n=|w|-1$.

Assume by contradiction that $w=_{G_{A}}v$ for a word $v$ over $\{a,b,c\}$ different from
$w$. If $|v|>|w|$ then $v_1^{-1}w=_G v_2$, where $v=v_1v_2$ with $|v_1|=|w|$ and non-empty $v_2$. Since $ord(v_1^{-1}w)=0$ we get $v_2^3=e$ by Lemma \ref{Lemma_ord(w)=0} contradicting our first statement.

We may assume that $|v|=|w|$, and let us take a shortest such relation $v=_{G_{A}}w$. By Corollary \ref{Cor_sections} there is $s\in X^{*}$ such that $v|_{s}=a^n$. Let $u:=w|_{s}$. Since $A$ is bireversible, $a^n\neq u$. Furthermore $a^n=_{G_{A}}u$ is a relation where the first letter of $u$ is $b$ or $c$, because of minimality.
Taking the section at $11$ of this relation, we get $a^n=a^n|_{11}=_{G_{A}}u|_{11}=_{G_{A}}u$. The first letters of
$u$ and $u|_{11}$ are equal since $b|_{11}=b$ and $c|_{11}=c$. After canceling these letters we get a shorter relation.
Therefore $u$ and $u|_{11}$ are equal as words. However, this is impossible already for
the first two letters of $w$ as the following computations show:
\begin{align*}
ba|_{11}=bc, \quad bb|_{11}=ba, \quad bc|_{11}=bb,\\
ca|_{11}=cb, \quad cb|_{11}=cc, \quad cc|_{11}=ba.
\end{align*}
\end{proof}

We are ready to prove Theorem~\ref{thm_main}.

\begin{proof}[Proof of Theorem~\ref{thm_main}]
Let $N$ be the subgroup of $G_A$ generated by elements $a^{-n}\alpha a^n$,
$n\in\mathbb{Z}$. Then $N$ is a normal abelian subgroup of exponent $3$ by
Lemma~\ref{Lemma_ord(w)=0}. Let us show that $N=\bigoplus_{\mathbb{Z}}\langle
a^{-n}\alpha a^n\rangle\cong \bigoplus_{\mathbb{Z}}\mathbb{Z}_3$. Suppose there is a
relation
\[
(a^{-n_1}\alpha^{\varepsilon_1} a^{n_1})(a^{-n_2}\alpha^{\varepsilon_2} a^{n_2})\ldots (a^{-n_k}\alpha^{\varepsilon_k} a^{n_k})=e,
\]
where $\varepsilon_i\in\{\pm 1\}$ and $n_1<n_2<\ldots<n_k$. Substituting $\alpha=ab^{-1}$
and $\alpha^{-1}=ac^{-1}$ in the previous expression, and making free cancelations we get a relation of the form
$a^{-n_1+1}w^{-1}a^{n_k}=e$, where $w$ is a word over $\{a,b,c\}$ with at least one
occurrence of $b$ or $c$. We get a contradiction with
Corollary~\ref{Cor_free_semigroup}, which proves our claim.

By Corollary~\ref{Cor_free_semigroup} the element $a$ has infinite order and
$N\cap\langle a\rangle=\{e\}$. Since $G_A=N\langle a\rangle$ and $a$ acts on $N$ by
conjugation via the shift, we get the statement of the theorem.
\end{proof}

\begin{remark} In the automaton realizations of
$\mathbb{Z}_m\wr\mathbb{Z}$ from \cite{GriZuk:lampl,SilvaStein:lamp} elements of the
subgroup $\bigoplus_{\mathbb{Z}}\mathbb{Z}_m$ are finitary transformations. An element $g$ is called finitary whenever there exists $n\in\mathbb{N}$ such that $g(uv)=g(u)v$ for all $u\in
X^n$ and $v\in X^{*}$, or equivalently, the sections of $g$ at all vertices of $n$-th level are
trivial. In our case, non-trivial elements of the subgroup
$N=\bigoplus_{\mathbb{Z}}\mathbb{Z}_3$ are not finitary.
\end{remark}

The stabilizers $St_{G}(w)$ of points $w\in X^{\mathbb{N}}$ are known as parabolic
subgroups of $G$. For every bireversible automaton, almost every point of
$X^{\mathbb{N}}$ with respect to the uniform measure on $X^{\mathbb{N}}$ has trivial
stabilizer (see \cite{SteinVor}). For our group $G_A$ there are points with a
non-trivial stabilizer, for example $a\in St_{G_{A}}(11\ldots)$. Moreover, using the
results obtained in \cite{gri_kravch} it is not difficult to prove that the stabilizer
$St_{G_A}(w)$ is non-trivial exactly when the sequence $w$ is eventually periodic. We
omit the proof because it uses quite different technique. Instead we just prove the
analog of Proposition~4.6 from \cite{SilvaStein:lamp}.

\begin{proposition}
The stabilizer $St_{G_A}(w)$ of every point $w\in X^{\mathbb{N}}$ is a cyclic group.
\end{proposition}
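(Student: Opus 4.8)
The plan is to leverage the structure theorem just proved, namely $G_A\cong \mathbb{Z}_3\wr\mathbb{Z}=N\rtimes\langle a\rangle$ with $N=\bigoplus_{\mathbb{Z}}\mathbb{Z}_3$ and $G_A/N\cong\mathbb{Z}$, together with the fact that $N$ is contained in the abelian exponent-$3$ subgroup $W$ from Lemma~\ref{Lemma_about_W}. The crucial observation is that \emph{no nontrivial element of $W$ fixes any point of $X^{\mathbb{N}}$}: an element $g\in W$ is coordinatized by a sequence $(\pi_1,\pi_2,\ldots)$ with $\pi_i\in Alt_3$, and if $g\neq e$ then some $\pi_j$ is a $3$-cycle, hence has no fixed letter, so the $j$-th letter of $g(w)$ differs from that of $w$ for every $w\in X^{\mathbb{N}}$. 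Granting this, a stabilizer can meet $N$ only trivially, and since $G_A/N$ is infinite cyclic, the stabilizer must be cyclic.

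In detail, I would first note that $N\leq W$: the group $N$ is generated by the elements $a^{-n}\alpha a^{n}$, each of which is represented by a word over $\{a^{\pm1},b^{\pm1},c^{\pm1}\}$ with $ord=0$ (here $\alpha=ab^{-1}$), hence lies in $W$ by Lemma~\ref{Lemma_ord(w)=0}, and $W$ is a subgroup. Next, fix $w\in X^{\mathbb{N}}$ and consider $St_{G_A}(w)$. By the observation above, $St_{G_A}(w)\cap W=\{e\}$, and therefore $St_{G_A}(w)\cap N=\{e\}$. Consequently the composition $St_{G_A}(w)\hookrightarrow G_A\twoheadrightarrow G_A/N\cong\mathbb{Z}$ is injective, so $St_{G_A}(w)$ is isomorphic to a subgroup of $\mathbb{Z}$, i.e.\ it is trivial or infinite cyclic. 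The element $a\in St_{G_A}(111\ldots)$ shows that the infinite cyclic case genuinely occurs.

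I do not expect a real obstacle. The only points needing care are the two ingredients singled out above: the containment $N\leq W$, which is a direct consequence of Lemma~\ref{Lemma_ord(w)=0}, and the fixed-point-freeness of nontrivial elements of $W$, which is immediate from the description of $W$ as $Alt_3^{\mathbb{N}}$ recalled just before Lemma~\ref{Lemma_about_W}. Everything else is routine group theory once the identification $G_A\cong\mathbb{Z}_3\wr\mathbb{Z}$ is in hand.
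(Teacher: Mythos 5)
Your proposal is correct and follows essentially the same route as the paper: both hinge on the observation that a nontrivial element of $W\supseteq N$ acts letterwise by a sequence of permutations in $Alt_3$ and hence has no fixed point in $X^{\mathbb{N}}$, so $St_{G_A}(w)\cap N=\{e\}$, and then use the decomposition $G_A=N\rtimes\langle a\rangle$. The only difference is cosmetic: the paper extracts a generator by taking an element $na^k$ with $k$ minimal, which is just the standard proof that a subgroup of $\mathbb{Z}$ is cyclic, whereas you invoke that fact directly via the injection $St_{G_A}(w)\hookrightarrow G_A/N\cong\mathbb{Z}$.
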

\begin{proof}
Note that a non-trivial element $g\in W\supset N$ has no fixed points in
$X^{\mathbb{N}}$, since $g$ corresponds to a non-trivial sequence of permutations
$(\pi_1, \pi_2, \ldots)\neq (\varepsilon,\varepsilon,\ldots)$, $\pi_i\in Alt_3$ and
$g(w)=g(x_1x_2\ldots)=\pi_1(x_1)\pi_2(x_2)\ldots$.

Let $g=na^k$, where $n\in N$ and $k\in\mathbb{N}$, be an element of $St_{G_A}(w)$ with
the smallest $k$. We prove that $g$ is a generator of $St_{G_A}(w)$. Take any $h\in
St_{G_A}(w)$, $h=n'a^m$, where $n'\in N$ and $m\in\mathbb{Z}$. Then $m$ is a multiple
of $k$; so let $m=kl$ with $l\in\mathbb{Z}$. We have $g^l= (na^k)^l=\overline{n}
a^{kl}$ for an opportune $\overline{n}\in N$. Then $hg^{-l}=n'\overline{n}^{-1}\in
N\cap St_{G_A}(w)=\{e\}$. Hence $h=g^l$.
\end{proof}

\section*{Acknowledgment}
This work was initiated while the first author was visiting Graz University of
Technology, whose support and hospitality are gratefully acknowledged. The second
author was supported by Austrian Science Fund project FWF P24028-N18. The third author
acknowledges support from the European Regional Development Fund through the programme
COMPETE and by the Portuguese Government through the FCT under the project
PEst-C/MAT/UI0144/2013 and the support of the FCT project SFRH/BPD/65428/2009. The
authors would also like to thank the developers of the program package
\textrm{AutomGrp} \cite{AutomGrp} which is been used to perform many of the
computations described in this paper.

\Addresses
\end{document}